\newcommand{\cl}[1]{\mathcal{#1}}
\newcommand\minus{%
  \setbox0=\hbox{-}%
  \vcenter{%
    \hrule width\wd0 height \the\fontdimen8\textfont3%
  }%
}
\def\@maketitle{%
  \newpage
  \null
  \vskip 2em%
  \begin{center}%
  \let \footnote \thanks
    {\Large\bfseries \@title \par}%
    \vskip 1.5em%
    {\normalsize
      \lineskip .5em%
      \begin{tabular}[t]{c}%
        \@author
      \end{tabular}\par}%
    \vskip 1em%
    {\normalsize \@date}%
  \end{center}%
  \par
  \vskip 1.5em}
\newtheorem{theorem}{Theorem}
\newtheorem{remark}{Remark}
\newtheorem{lemma}{Lemma}
\patchcmd{\@algocf@start}{%
  \begin{lrbox}{\algocf@algobox}%
}{%
  \rule{0.1\textwidth}{\z@}%
  \begin{lrbox}{\algocf@algobox}%
  \begin{minipage}{0.8\textwidth}%
}{}{}
\patchcmd{\@algocf@finish}{%
  \end{lrbox}%
}{%
  \end{minipage}%
  \end{lrbox}%
}{}{}
\title{The bipartite unconstrained 0-1 quadratic programming problem: polynomially solvable cases\footnote{This research work was supported by an NSERC  Discovery grant awarded to Abraham P Punnen. }}
\author {Abraham P. Punnen\thanks{apunnen@sfu.ca}}
\author {Piyashat Sripratak\thanks{psriprat@sfu.ca}}
\author {Daniel Karapetyan\thanks{daniel.karapetyan@gmail.com}}
\affil{Department of Mathematics, Simon Fraser University Surrey, Central
City, 250-13450 102nd AV, Surrey, British Columbia, V3T 0A3, Canada}
\date{}
\begin{document}

\maketitle

\begin{abstract}
\noindent We consider the bipartite unconstrained 0-1 quadratic programming problem (BQP01) which is a generalization of the well studied unconstrained 0-1 quadratic programming problem (QP01). BQP01 has numerous applications and the problem is known to be MAX SNP hard.  We show that if the rank of an associated $m\times n$ cost matrix $Q=(q_{ij})$ is fixed, then BQP01 can be solved in polynomial time.  When $Q$ is of rank one, we provide an $O(n\log n)$ algorithm  and this complexity reduces to  $O(n)$ with additional assumptions. Further, if $q_{ij}=a_i+b_j$ for some  $a_i$ and $b_j$, then BQP01 is shown to be solvable in $O(mn\log n)$ time. By restricting $m=O(\log n),$ we obtain yet another polynomially solvable case of BQP01  but the problem remains MAX SNP hard if $m=O(\sqrt[k]{n})$ for a fixed $k$. Finally, if the minimum number of rows and columns to be deleted from $Q$ to make the remaining matrix non-negative is $O(\log n)$ then we show that BQP01 polynomially solvable but it is NP-hard if this number is $O(\sqrt[k]{n})$ for any fixed $k$.  \\

\noindent
\textbf{Keywords:} quadratic programming, 0-1 variables, polynomial algorithms, complexity, pseudo-Boolean programming.
\end{abstract}

\section{Introduction}
The unconstrained 0-1 quadratic programming problem (QP01) is to
\begin{align*}
&\mbox{Maximize } f(x) = x^TQ^{\prime}x + c{^{\prime}}x+c^{\prime}_0\\
&\mbox{Subject to } x \in \{0,1\}^n,
\end{align*}
where $Q^{\prime}$ is an $n\times n$ matrix, $c{^{\prime}}$ is a row vector in $R^n$, and $c^{\prime}_0$ is a constant. The problem QP01 is formulated and studied in various alternative formats. For example, without loss of generality, one can assume that $c_0$ is zero, $c{^{\prime}}$ is the zero vector and $Q$ is symmetric. If $c^{\prime}$ is allowed to be arbitrary, then the diagonal elements of $Q$ can be assumed to be zero, without loss of generality. In another variation, $x$ is assumed to be in $\{-1,1\}^n$ instead of $\{0,1\}^n$. However, this variation can be reduced to the 0-1 version using a linear transformation. QP01 has been studied extensively in literature because of its various applications and rich theoretical structure. For further details and additional  references on QP01 we refer to~\cite{bo,r2,bor,mc,fer,ham1,a3,yw}. The focus of this paper is on a problem closely related to QP01 which we call the {\it bipartite unconstrained 0-1 quadratic programming problem} (BQP01). The problem BQP01 can be defined as follows.
\begin{align*}
&\mbox{Maximize } f(x,y) = x^TQy + cx+dy+c_0\\
&\mbox{Subject to }  x \in \{0,1\}^m, y\in \{0,1\}^n,
\end{align*}
where $Q = (q_{ij})$ is an $m\times n$ matrix, $c=(c_1,c_2, \ldots ,c_m)$ is a row vector in $R^m$, $d=(d_1,d_2,\ldots ,d_n)$ is a row vector in $R^n$ and $c_0$ is a constant.  Without loss of generality, we assume that $m \leq n$. Let
\begin{equation}\label{eq0}
\bar{Q}=\left[
\begin{array}{c|c}
O_{m\times m} & Q\\ \hline
O_{n\times m} & O_{n\times n}
\end{array}\right],\;
 \bar{c}=\left[
\begin{array}{c|c}
c & d
\end{array}\right]
 \mbox{ and }   w^T=\left[
\begin{array}{c|c}
x & y
\end{array}\right],
\end{equation}  where $O_{n\times n}$, $O_{n\times m}$ and $O_{m\times m}$ are zero matrices.
Then BQP01 can be formulated as the QP01
\begin{align}\label{uqp1}
&\mbox{Maximize } f(w) = w^T\bar{Q}w + \bar{c}w+c_0\\ \label{uqp2}
&\mbox{Subject to } w \in \{0,1\}^{m+n}.
\end{align}
Thus, the exact and heuristic algorithms available to solve QP01 can be used directly to solve BQP01. However, such a transformation increases the problem size and could limit our ability in handling large scale problems. Further, BQP01 has additional structure that can be exploited to obtain efficient algorithms and derive interesting theoretical properties.\\

We can also show that QP01 is a special case of BQP01. Choose \begin{equation}\label{equa}Q=Q^{\prime}+2MI,\; c=\frac{1}{2}c^{\prime}-M\textbf{u} \mbox{ and } d=\frac{1}{2}c^{\prime}-M\textbf{u},\end{equation} where $I$ is an $n\times n$ identity matrix, $\textbf{u}\in R^n$ is an all one row vector and $M$ is a very large number. Then the resulting BQP01 solves  QP01 since the penalty parameter $M$ forces $x_i=y_i$ in an optimal solution of this modified BQP01. The transformation discussed in equation (\ref{equa}) is important since it provides additional flexibility in developing algorithms for QP01 through BQP01 formulations. This provides additional motivation to study BQP01.\\

Consider the {\it continuous relaxation} BQPC of BQP01 given by:
\begin{flalign*}
\mbox{ BQPC: \hspace{1cm}} &\mbox{Maximize }  x^TQy + cx+dy+c_0&\\
&\mbox{Subject to }  x \in [0,1]^m, y\in [0,1]^n,&
\end{flalign*}

Note that multilinear polynomials in binary variables attain their maximum at a vertex of the unit cube~\cite{r2,r1}. Thus, BQPC and BQP01 are equivalent. BQPC is a special case of the {\it bilinear programming problem} (BLP)~\cite{alt,kono,yaj} and hence BQP01 can also be solved using any algorithm for solving the BLP\@. However, by exploiting the special structure of BQPC, more efficient algorithms may be obtained. To the best of our knowledge, BQP01 has not been studied in literature from the point of view of bilinear programs by exploiting the inherent structure of the problem.

A graph theoretic interpretation of BQP01 can be given as follows. Let $V_1=\{1,2,\ldots ,m\}$ and $V_2=\{1,2,\ldots ,n\}$. Consider the bipartite graph $G=(V_1,V_2,E)$.   For each node $i\in V_1$ and $j\in V_2$, respective costs $c_i$ and $d_j$ are prescribed. Further, for each $(i,j)\in E,$ a cost $q_{ij}$ is given. Then the {\it maximum weight induced subgraph problem} on $G$ is to find $U_1 \subseteq V_1$, $U_2\subseteq V_2$ such that $\sum_{i\in U_1}c_i+\sum_{j\in U_2}d_j + \sum_{(i,j)\in E^{1,2}}q_{ij}$ is maximized, where $ E^{1,2}$ is the edge set of the subgraph of $G$ induced $U_1\cup U_2$. The maximum weight induced subgraph problem on $G$ is precisely a BQP01, where $q_{ij}=0$ if $(i,j)\notin E$.

   Many well known combinatorial optimization problems can be modeled as BQP01. Consider the bipartite graph $G=(V_1,V_2,E)$ with $w_{ij} > 0$ being the weight of the edge $(i,j)\in E$.  Then the {\it maximum weight biclique problem} (MWBP)~\cite{amb,tan} is to find a biclique in $G$ of maximum total edge-weight.  Define $$q_{ij} = \begin{cases} w_{ij} &\mbox{if } (i,j)\in E \\
-M & \mbox{otherwise }, \end{cases} $$
where $M$ is a large positive number. Set $c$ and $d$ as zero vectors and also set $c_0=0$. Then the BQP01 with this choice of $Q,c,d$ and $c_0$ solves the MWBP\@. This immediately shows that BQP01 is NP-hard and one can also establish some approximation hardness results with appropriate assumptions~\cite{amb,tan}. MWBP  has applications in data mining, clustering and bioinformatics~\cite{chang,tanay} which in turn become applications of BQP01.

 Another application of BQP01 arises in approximating a matrix by a rank-one binary matrix~\cite{gills,a2,a1,lu,shen}. For example, let $H=(h_{ij})$ be a given $m\times n$ matrix and we want to find an $m\times n$ matrix $A=(a_{ij}),$ where $a_{ij}=u_iv_j$ and $u_i, v_j\in \{0,1\},$ such that $\sum_{i=1}^m\sum_{j=1}^n(h_{ij}-u_iv_j)^2$ is minimized. The matrix $A$ is called a rank one approximation of $H$ and can be identified by solving the BQP01 with $q_{ij}=1-2h_{ij},$ $c_i=0,$  $d_j=0$  and $c_0=0$ for all $i\in I$ and $j\in J$.  Binary matrix factorization is an important topic in mining discrete patterns in binary data~\cite{lu,shen}. If $u_i$ and $v_j$ are required to be in $ \{-1,1\}$  then also the resulting approximation problem can be formulated as a BQP01.
 The maximum cut problem on a bipartite graph can be formulated as a BQP01 and this gives yet another application of the model.  BQP01 can also be used to find approximations to the cut-norm of a matrix~\cite{alon,rs}.
When the matrix $Q^{\prime}$ is positive semidefinite, QP01 can be solved directly as a BQP01.  This follows from a corresponding result from BLP~\cite{kono} and the equivalence between BQPC and BQP01.

   To the best of our knowledge, BQP01 has not been thoroughly investigated in literature, especially from the point of view of polynomially solvable special cases. Some recent references on the problem considers theoretical analysis of approximation algorithms~\cite{ppk} and experimental analysis of heuristics~\cite{gtpk,kp}. The primary focus of this work is to identify polynomially solvable special cases of BQP01.

   We show that BQP01 can be solved in polynomial time if the rank of $Q$ is fixed. It may be noted that the corresponding version of QP01 is NP-hard even if $Q^{\prime}$ is of rank one~\cite{plh}. However, when rank of $Q^{\prime}$ is fixed and $Q^{\prime}$ is positive semidefinite with $c^{\prime}=0$, QP01 can be solved in polynomial time~\cite{all,fer,kar,mal}.  When $Q$ is of rank one, we show that $BQP01$ can be solved in $O(n\log n)$ time. In addition, we obtain an $O(n)$ algorithm for a special case of this rank-one problem. When $q_{ij}=a_i+b_j,$ we present an $O(mn\log n)$ algorithm.  Further, we observe that if $m=O(\log n)$ then BQP01 can be solved in polynomial time but if $m=O(\sqrt[k]{n})$ for a fixed $k$, the problem remains MAX SNP hard. Similarly, if the number of negative entries in $Q$ are $O(\log n)$ then BQP01 is solvable in polynomial time but if the number of negative entries are $O(\sqrt[k]{n})$ for a fixed $k$, the problem becomes NP-hard.

\section{Equivalent formulations}

QP01 is often presented in literature in various equivalent forms. Similar equivalent formulations can be obtained for BQP01 as well which are summarized in this section. Although many of the transformations discussed here follows almost directly from the corresponding transformation for QP01, we present them here for completness, comparison, and for notational convenience. Further, these transformations, albeit simple,  preserve the bipartite structure of the problem which allows us establishing interesting complexity results.

When the linear and constant terms from BQP01 are dropped, we get the homogeneous version of the problem defined as,
\begin{flalign*}
\mbox{BQP01(H):\hspace{1cm} }&\mbox{Maximize } f(x,y) = x^TQy& \\
&\mbox{Subject to } x \in \{0,1\}^m, y\in \{0,1\}^n.&
\end{flalign*}
In fact, the general BQP01 can be represented in the homogeneous form by introducing two additional variables. Let \begin{equation}\label{barq} \bar{Q}=\left[
\begin{array}{c|c}
Q & c^T\\ \hline
d & c_0
\end{array}\right].\end{equation} Then  BQP01 is equivalent to
\begin{flalign*}
\mbox{BQP01($\bar{H}$):\hspace{1cm} }&\mbox{Maximize } f(\bar{x},\bar{y}) = \bar{x}^T\bar{Q}\bar{y} &\\
&\mbox{Subject to } \bar{x} \in \{0,1\}^{m+1}, \bar{y}\in \{0,1\}^{n+1}, \bar{x}_{m+1} =1, \bar{y}_{n+1}=1,&
\end{flalign*}
where $\bar{x}_{m+1}$ and $\bar{y}_{n+1}$ are respectively the $(m+1)$th and $(n+1)$th components of $\bar{x}$ and $\bar{y}$. The restrictions $\bar{x}_{m+1} =1, \bar{y}_{n+1}=1$ in BQP01($\bar{H}$) can be dropped by replacing $c_0$ with a large number $M$ in $\bar{Q}$. Thus, we have
\begin{remark}\label{rm1}
A BQP01 can always be represented in an equivalent homogeneous form in the sense that the two problems have identical optimal solutions set.
\end{remark}
 Another important variation of BQP01 is obtained by restricting the variables to $1$ or $-1$ instead of $0$ or $1$. We call this the {\it cut version} of BQP01 and denote it by BQP-11. The problem BQP-11 can be stated as
\begin{flalign*}
\mbox{BQP-11:\hspace{1cm} }&\mbox{Maximize } \phi(x,y) = x^TQy+cx+dy+c_0 &\\
&\mbox{Subject to } x \in \{-1,1\}^m, y\in \{-1,1\}^n.&
\end{flalign*}
By dropping the linear and constant terms from BQP-11, we get the corresponding homogeneous version
\begin{flalign*}
\mbox{BQP-11(H):\hspace{1cm} }&\mbox{Maximize } \phi(x,y) = x^TQy& \\
&\mbox{Subject to } x \in \{-1,1\}^m, y\in \{-1,1\}^n.&
\end{flalign*}
As in the case of  BQP01, its cut version BQP-11 can also be represented in the corresponding homogeneous form by introducing two additional variables. Let $ \bar{Q}$ be defined as in equation (\ref{barq}) and  $\bar{x} \in \{-1,1\}^{m+1}$, $\bar{y}\in \{-1,1\}^{n+1}$.
 Then BQP-11 is equivalent to
\begin{flalign*}
\mbox{BQP-11($\bar{H}$):\hspace{1cm} }&\mbox{Maximize } \phi(\bar{x},\bar{y}) = \bar{x}^T\bar{Q}\bar{y}& \\
&\mbox{Subject to } \bar{x} \in \{-1,1\}^{m+1}, \bar{y}\in \{-1,1\}^{n+1}, \bar{x}_{m+1} =1, \bar{y}_{n+1}=1.&
\end{flalign*}
Note that the restriction $\bar{x}_{m+1} =1, \bar{y}_{n+1}=1$ in BQP-11($\bar{H}$) can be replaced by $\bar{x}_{m+1} =\bar{y}_{n+1}$ without affecting the optimal solution of BQP-11($\bar{H}$). This is possible because, for any solution $\bar{x},\bar{y}$ with $\bar{x}_{m+1} =-1, \bar{y}_{n+1}=-1$, the solution $-\bar{x},-\bar{y}$ satisfies $-\bar{x}_{m+1} =1, -\bar{y}_{n+1}=1$ and $\phi(\bar{x},\bar{y})=\phi(-\bar{x},-\bar{y})$. Further, the condition
$\bar{x}_{m+1} =\bar{y}_{n+1}$ can be dropped by replacing $c_0$ with a large number $M$ in $\bar{Q}$ resulting in a homogeneous version of BQP-11. Thus, we have
\begin{remark}\label{rm2}
A BQP-11 can always be represented in an equivalent homogeneous version in the sense that the two problems have identical optimal solutions set.
\end{remark}

Consider the linear transformation \begin{equation}\label{tr1}x=2w-e_m \mbox{  and } y=2z-e_n,\end{equation} where $e_m$ and $e_n$ are all-one vectors in $R^m$ and $R^n$, respectively. Using (\ref{tr1}), BQP-11 can be reduced to the BQP01
\begin{align*}
&\mbox{Maximize } f(w,z) = w^T\tilde{Q}z + \tilde{c}w+\tilde{d}z+\tilde{c}_0&\\
&\mbox{Subject to }  w \in \{0,1\}^m, z\in \{0,1\}^n,&
\end{align*}
where $\tilde{Q}=4Q$, $\tilde{c}=2(c-(Qe_n)^T)$, $\tilde{d}=2(d-e_m^TQ)$ and  $\tilde{c}_0=e_m^TQe_n-ce_m-de_n+c_0$.\\

Similarly, using the linear transformation
\begin{equation}\label{tr2}
x=\frac{1}{2}(w+e_m) \mbox{ and } y = \frac{1}{2}(z+e_n)
\end{equation}
BQP01 can be reduced to the BQP-11
\begin{align*}
&\mbox{Maximize } \phi(w,z) = w^T\hat{Q}z + \hat{c}w+\hat{d}z+\hat{c}_0&\\
&\mbox{Subject to }  w \in \{-1,1\}^m, z\in \{-1,1\}^n&
\end{align*}
where $\hat{Q}=\frac{1}{4}Q,$ $\hat{c}=\frac{1}{4}(Qe_n)^T+\frac{1}{2}c,$ $\hat{d}=\frac{1}{4}e^T_mQ+\frac{1}{2}d$ and $\hat{c}_0=\frac{1}{4}e^T_mQe_n+\frac{1}{2}ce_m+\frac{1}{2}de_n+c_0.$

It is well known that the maximum weight cut problem (MaxCut) on a general graph $G$ is equivalent to QP01~\cite{ham1}. In the maximum weight cut problem, if we restrict the graph $G$ to be bipartite, we get an instance of the {\it bipartite maximum weight cut problem} which is denoted by B-MaxCut. Indeed, viewing B-MaxCut as a general MaxCut problem yields an equivalent QP01.  Many approximation algorithms for MaxCut assume that the associated edge weights are nonnegative~\cite{gw}. However, for nonnegative edge weights, B-MaxCut is a trivial problem since the generic bipartition of the underlying graph gives the optimal cut. Thus, the topic that is more interesting is when the edge weights take positive as well as negative values. Developing approximation algorithms for BQP01 or for B-MaxCut is not the focus of this paper. However, we do want to indicate the equivalence between B-MaxCut and BQP01 to further understand the inherent complexity of the problems.

Let $G=(I,J,E)$ be a bipartite graph. Two vectors  $x\in \{-1,1\}^m$ and $y\in \{-1,1\}^n$ determine a cut $(U_1\cup U_2, H_1\cup H_2)$ in $G$ if and only if
$U_1=\{i\in I : x_i=1\},$ $ H_1=\{i\in I : x_i=-1\},$ $U_2=\{j\in J : y_j=1\},$ and $H_2=\{j\in J : y_j=-1\}.$  We call $(x,y)$ the {\it incidence vector} of the cut $(U_1\cup U_2, H_1\cup H_2)$. Let $q_{ij}$ be the weight of the edge $(i,j)$ in $G$. Then the value of the cut $(U_1\cup U_2, H_1\cup H_2)$ is given by
\begin{equation}
\sum_{i\in U_1, j\in H_2}q_{ij}+\sum_{i\in H_1,j\in U_2}q_{ij}=
\sum_{x_i=-y_j}q_{ij}
\end{equation}

\begin{theorem} The problems BQP01 and B-MaxCut are  equivalent in the sense that:
 \begin{enumerate}\vspace{-5pt}\item For any instance of BQP01, it is possible to construct a complete bipartite graph $G$ such that an optimal solution to the B-MaxCut problem on $G$ gives an optimal solution to BQP01.
\item For any instance of B-MaxCut on a bipartite graph $G=(I,J,E)$, it is possible to construct an instance of BQP01 with an $m\times n$ cost matrix $Q$ such that an optimal solution to the BQP01 gives an optimal solution to the B-MaxCut problem on $G$.
\end{enumerate}
\end{theorem}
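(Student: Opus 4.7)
The plan is to route both reductions through the $\{-1,1\}$ version BQP-11 and its homogeneous form BQP-11(H), using the elementary identity
\begin{equation*}
x^T Q y \;=\; \sum_{x_i = y_j} q_{ij} \;-\; \sum_{x_i \ne y_j} q_{ij} \;=\; \sum_{i,j} q_{ij} \;-\; 2\sum_{x_i \ne y_j} q_{ij},
\end{equation*}
valid for every $x \in \{-1,1\}^m$, $y \in \{-1,1\}^n$. Since the paper already identifies $\sum_{x_i=-y_j}q_{ij}$ with the value of the cut whose incidence vector is $(x,y)$ on the complete bipartite graph weighted by $q_{ij}$, this identity says that maximizing $x^T Q y$ over $\{-1,1\}^m \times \{-1,1\}^n$ is equivalent to \emph{minimizing} the cut value with edge weights $q_{ij}$, and hence to \emph{maximizing} the cut value with edge weights $-q_{ij}$.

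For Part 1, I would first apply the change of variables (\ref{tr2}) to rewrite the given BQP01 as an equivalent BQP-11 with matrix $\hat Q$ and coefficients $\hat c, \hat d, \hat c_0$. Remark \ref{rm2} then yields an equivalent homogeneous BQP-11(H) on an $(m+1)\times(n+1)$ matrix $\bar Q$ obtained from $\hat Q,\hat c,\hat d$ as in (\ref{barq}), with the constant $\hat c_0$ replaced by a sufficiently large $M$ to force $\bar x_{m+1}=\bar y_{n+1}$ at every optimum. I would then take $G$ to be the complete bipartite graph on vertex classes $\{1,\dots,m+1\}$ and $\{1,\dots,n+1\}$ with edge weights $-\bar q_{ij}$; the identity above shows that the incidence vector of any maximum weight cut of $G$ is an optimizer of BQP-11(H), which, after canonically flipping signs if needed so that the last coordinates both equal $+1$ and inverting (\ref{tr2}), produces an optimal solution of the original BQP01.

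For Part 2, I would go in the opposite direction. Given $G=(I,J,E)$ with weights $w_{ij}$, I define the $m\times n$ matrix $Q$ by $q_{ij}=-w_{ij}$ for $(i,j)\in E$ and $q_{ij}=0$ otherwise, and take zero linear and constant terms, giving a BQP-11(H) instance on $Q$. The identity above shows that a maximizer of $x^T Q y$ over $\{-1,1\}^m \times \{-1,1\}^n$ is exactly the incidence vector of a maximum weight bipartite cut in $G$. Since BQP-11(H) is a special case of BQP-11, the transformation (\ref{tr1}) converts this problem into a BQP01 on some $m\times n$ cost matrix $\tilde Q$; applying the inverse of (\ref{tr1}) to an optimal BQP01 solution recovers the desired optimal cut.

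The main technical point to verify is the homogenization step in Part 1: one has to check that $M$ can be chosen large enough (for instance, any value exceeding the sum of absolute values of the entries of $\bar Q$ suffices) so that every optimal $\{-1,1\}$ pair $(\bar x,\bar y)$ for the associated BQP-11(H) must satisfy $\bar x_{m+1}=\bar y_{n+1}$, after which the symmetry $\phi(\bar x,\bar y)=\phi(-\bar x,-\bar y)$ of the homogeneous objective lets us assume both equal $+1$ without loss of generality. Once this is secured, every other step is a mechanical application of the reformulations already established in Section 2 together with the cut-value identity above.
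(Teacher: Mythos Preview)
Your proposal is correct and follows essentially the same route as the paper: both proofs pass through BQP-11(H) and rest on the identity $x^TQy=\sum_{i,j}q_{ij}-2\sum_{x_i=-y_j}q_{ij}$, with Part~1 weighting $K_{m,n}$ by a negative multiple of the homogenized matrix and Part~2 setting $q_{ij}$ to a negative multiple of the given edge weights. The only difference is cosmetic: the paper absorbs the passage from BQP01 to BQP-11(H) into a single ``without loss of generality'' invocation of Remarks~\ref{rm1} and~\ref{rm2} (so its graph is $K_{m,n}$ rather than $K_{m+1,n+1}$), whereas you spell out the homogenization, the choice of $M$, and the back-substitution explicitly.
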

\begin{proof}
 Since BQP01 is equivalent to a BQP-11(H) (see remarks \ref{rm1} and \ref{rm2}), without loss of generality we assume that BQP01 is given in the equivalent homogeneous cut form BQP-11(H). Now,
\begin{align*}
\phi(x,y)&=\sum_{ij}q_{ij}x_iy_j = \sum_{x_i=y_j}q_{ij}-\sum_{x_i=-y_j}q_{ij} =\sum_{ij}q_{ij} - 2\sum_{x_i=-y_j}q_{ij}
\end{align*}
Since $\sum_{ij}q_{ij}$ is a constant, maximizing $\phi(x,y)$ is equivalent to maximizing $- 2\sum_{\bar{x}_i=-\bar{y}_j}q_{ij}$. Thus, by solving the B-MaxCut problem on a complete bipartite graph $K_{m,n}$ with $-2q_{ij}$  chosen as the weight of the edge $(i,j)$, we can solve BQP-11(H).

To establish the second part of the theorem, we show that the B-MaxCut problem on $G$ with edge weights $c_{ij}$ for $(i,j)\in E$ can be solved as a BQP-11(H)\@. Let $(U_1\cup U_2, H_1\cup H_2)$ be a cut in $G$ and $(x,y)$  be the corresponding incidence vector. Let $g(U,H)= \sum_{x_i=-y_j}c_{ij}$ be the value of the cut $(U_1\cup U_2, H_1\cup H_2)$. Then it can be verified that
$g(U,H) = \frac{1}{2}\sum_{ij}c_{ij}-\frac{1}{2}\sum_{ij}c_{ij}x_iy_j$ and, hence, maximizing $g(U,H)$ is equivalent to solving a BQP-11(H) with cost matrix $q_{ij}=-\frac{1}{2}c_{ij}$.
\end{proof}

\section{Polynomially solvable cases}

In BQP01 if we restrict the variables to $1$ or $-1$ instead of $0$ or $1$, we get the {cut version} of the problem. BQP01 and its cut version can be reduced to each other using appropriate linear transformations. Alon and Naor~\cite{alon} showed that the cut version of BQP01 with no linear or constant terms is MAX SNP hard. As a consequence, BQP01 is also MAX SNP hard. Thus, let us  focus on polynomially solvable special cases of BQP01.

 Polynomially solvable special cases of QP01 have been investigated by many authors~\cite{all,cela,fer,plh,kar,li,mal,pr}. Since BQP01 can be formulated as a QP01 using a linear transformation, whenever the transformed data satisfies the known conditions for polynomial solvability of QP01, we can solve the corresponding BQP01 also in polynomial time. Thus, it is interesting  to focus on new polynomially solvable cases that exploit the special structure of BQP01.

\subsection{Fixed parameter problems}

Let us first consider a simple case where the number of rows of $G$ is fixed.

\begin{theorem}\label{thmp1} BQP01 can be solved in polynomial time if $m=O(\log n)$ but remains MAX SNP hard if $m=O(\sqrt[k]{n})$ for any constant $k$. \end{theorem}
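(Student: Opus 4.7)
The plan is to handle the two halves of the statement by very different techniques. For the polynomial case $m=O(\log n)$, I would enumerate all $2^{m}$ vectors $x\in\{0,1\}^{m}$ and observe that for a fixed $x$ the objective
\[
f(x,y) = (x^{T}Q + d)\,y + cx + c_0
\]
is linear in $y$, so its maximum over $y\in\{0,1\}^{n}$ is attained by setting $y_{j}=1$ whenever the $j$-th coordinate of $x^{T}Q+d$ is positive and $y_{j}=0$ otherwise. Forming the row vector $x^{T}Q+d$ takes $O(mn)$ time and scanning its signs takes $O(n)$, so each enumerated $x$ costs $O(mn)$ and the total cost is $O(2^{m}mn)$. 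When $m=O(\log n)$, $2^{m}=n^{O(1)}$, and the algorithm runs in polynomial time in $n$.

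For the hardness direction, I would take any MAX SNP hard family of BQP01 instances, which exists because the Alon--Naor cut-norm result cited earlier shows BQP-11(H) is MAX SNP hard and that hardness transfers to BQP01 through the linear transformations discussed earlier in the paper; one may assume the witnessing instance has a square $N\times N$ cost matrix $Q$ together with linear data $(c,d,c_{0})$. Pad it into an $N\times N^{k}$ instance by forming $\tilde Q$ whose first $N$ columns are $Q$ and whose remaining $N^{k}-N$ columns are zero, keeping $c$ and $c_0$ unchanged, and replacing $d$ by $\tilde d \in R^{N^{k}}$ whose first $N$ entries are $d$ and whose remaining entries are zero. For any $\tilde y \in \{0,1\}^{N^{k}}$ the padded columns of $\tilde Q$ and padded entries of $\tilde d$ contribute nothing, so the objective on $(x,\tilde y)$ equals the original objective on $(x,y)$, where $y$ is the restriction of $\tilde y$ to its first $N$ coordinates. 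Corresponding solutions have equal values, the optimum is preserved, and any $r$-approximate solution of the enlarged instance projects to an $r$-approximate solution of the original; this is an approximation-preserving L-reduction with constants $\alpha=\beta=1$, so MAX SNP hardness carries over. The enlarged instance satisfies $m = N$ and $n = N^{k}$, hence $m = n^{1/k} = O(\sqrt[k]{n})$.

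The one point that deserves a second look is that zero-padding really gives an approximation-preserving reduction rather than merely preserving the exact optimum; but since the two objective functions are literally equal on corresponding solution pairs, the reduction trivially preserves approximation ratios with constants $\alpha=\beta=1$, so the MAX SNP hardness transfer is immediate and no further analysis is required.
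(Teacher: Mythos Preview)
Your proposal is correct and essentially matches the paper's proof: enumerate all $2^{m}$ vectors $x$ and optimize the resulting linear problem in $y$ for the polynomial part, and zero-pad the instance to inflate $n$ relative to $m$ for the hardness part. The only cosmetic difference is that the paper pads both rows and columns (turning an $m\times n$ instance into an $n\times(n+n^{k})$ one) rather than first assuming a square instance and padding only columns, but the underlying argument is identical.
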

\begin{proof} If $x$ is fixed to all possible values, the resulting objective function linear in $y$ and have a closed form solution of the type:
\begin{equation}\label{eq2}y^0_j = \begin{cases} 1 &\mbox{if } \sum_{i=1}^mq_{ij}x _i+d_j > 0 \\
0 & \mbox{otherwise }. \end{cases} \end{equation}
Since there are only $2^m$ choices for $x$, the polynomial solvability for $m=O(\log n)$ follows.
To establish that BQP01 is MAX SNP hard when $m=O(\sqrt[k]{n})$, consider an instance of BQP01 with data $Q,c,d$ and $c_0$. Let \begin{equation*}\label{eq0}
\bar{Q}=\left[
\begin{array}{c|c}
Q & O_{m\times n^k}\\ \hline
O_{(n-m)\times (n)} & O_{(n-m)\times n^k}
\end{array}\right],
 \bar{c}=\left[
\begin{array}{c|c}
c & \tilde{c}
\end{array}\right],
 \mbox{ and }   \bar{d}=\left[
\begin{array}{c|c}
d & \tilde{d}
\end{array}\right],
\end{equation*}  where $O_{m\times n^k}$, $O_{(n-m)\times (n)}$ and $ O_{(n-m)\times n^k}$ are zero matrices, $\tilde{c}$ is a zero row vector of dimension $n-m$ and $\tilde{d}$ is a zero row vector of dimension $n^k$. Now consider the instance of BQP01 with data $\bar{Q},\bar{c},\bar{d}$ and $c_0$. It can be verified that this new instance of BQP01 satisfies the condition of the theorem. Further, for every solution to the original BQP01, we can find a solution to the constructed instance of BQP01, where the corresponding objective function values are identical and viceversa. Since BQP01 is MAX SNP hard, the result follows.
\end{proof}
If $m$ is fixed, it can be verified that BQP01 can be solved in $O(n)$ time. Further, if $m=O(\log^k n)$ for fixed $k$, the complexity becomes quasi-polynomial.\\


Let $I\subseteq V_1=\{1,2,\ldots, m\}$ and $J\subseteq V_2=\{1,2,\ldots ,n\}$. We say that $I\cup J$ is a \textit{negative eliminator} of $Q$ if the matrix $Q^*$ obtained from $Q$ by deleting rows corresponding to $I$ and columns corresponding $J$ has only non-negative entries.
 A negative eliminator of smallest cardinality is called a minimum negative eliminator. Construct a bipartite graph $G_B=(V_1,V_2,E)$ where $(i,j)\in E$ if and only if $q_{ij} < 0$. Then a minimum negative eliminator of $Q$ is precisely a minimum vertex cover of $G_B$. Since the vertex cover problem on a bipartite graph can be solved in polynomial time, the minimum negative eliminator of $Q$ can be identified in polynomial time.
 Let $S^-=I^-\cup J^-$  be a minimum negative eliminator of $Q$.

\begin{theorem}\label{thfq} BQP01 can be solved in polynomial time if  $|S^-|$ is $ O(\log n)$ and the problem is NP-hard if $|S^-|$ is $O(\sqrt[k]{n})$ for some fixed $k$. \end{theorem}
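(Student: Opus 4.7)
The proof has two parts. For the polynomial-time result when $|S^-|=O(\log n)$, my plan is to enumerate exhaustively over the $2^{|S^-|}=n^{O(1)}$ possible $0$-$1$ assignments to the variables indexed by $S^-=I^-\cup J^-$. Given such an assignment, substituting the fixed values into the BQP01 objective eliminates every quadratic term involving a variable in $S^-$ and absorbs those contributions as corrections to the linear coefficients $c_i$, $d_j$ of the remaining variables. What is left is a BQP01 over the variables indexed by $(V_1\setminus I^-)\cup(V_2\setminus J^-)$ whose quadratic cost matrix is precisely the submatrix $Q^\ast$, which by the defining property of a negative eliminator has only non-negative entries. Thus the task reduces to showing that BQP01 with a non-negative cost matrix is polynomially solvable.

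For the non-negative case I would invoke the standard fact that a quadratic pseudo-Boolean function whose off-diagonal quadratic coefficients are non-negative is supermodular on $\{0,1\}^{m+n}$, and hence its maximum can be computed in strongly polynomial time via submodular minimization on the complement. Equivalently, one can pass to the QP01 lift in equation~(\ref{eq0}), symmetrize to obtain a matrix with non-negative off-diagonal entries and zero diagonal, and solve the resulting supermodular maximization as a minimum $s$--$t$ cut on a bipartite capacitated network built from $Q$, $c$ and $d$. Either route yields a polynomial-time algorithm for the residual problem, and combining with the $n^{O(1)}$ enumerations gives overall polynomial running time.

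For the NP-hardness statement I would use a padding reduction analogous to the one in Theorem~\ref{thmp1}. Starting from an arbitrary BQP01 instance with $m\times n$ cost matrix $Q$ (we may assume $m\le n$), construct an enlarged $n\times n^k$ matrix $\bar{Q}$ by placing $Q$ in a top-left $m\times n$ block and zeros everywhere else, and append zero components to the linear cost vectors accordingly. Since all newly introduced entries are non-negative, the bipartite graph $G_B$ associated with $\bar{Q}$ coincides with that of $Q$, so its minimum vertex cover, and hence $|S^-|$, is unchanged. Consequently $|S^-|\le m\le n=(n^k)^{1/k}$, so the padded instance satisfies $|S^-|=O(\sqrt[k]{\bar{n}})$ where $\bar{n}=n^k$. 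The optimum of the padded instance equals that of the original (the padded variables can be set to $0$ without changing the objective), giving a polynomial reduction from the NP-hard general BQP01.

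The principal obstacle is the supporting fact used in Part~1: that BQP01 with $Q\ge 0$ is polynomial-time solvable. Once this is in hand, the rest is careful bookkeeping about how the linear terms are updated when the $|S^-|$ variables are fixed, together with a routine zero-padding argument for the hardness direction. I would justify the supporting fact by noting directly that each monomial $q_{ij}x_iy_j$ with $q_{ij}\ge 0$ is supermodular in $(x,y)\in\{0,1\}^{m+n}$, that linear terms are both super- and submodular, and that finite sums of supermodular functions remain supermodular; hence the entire objective is supermodular and its maximum over $\{0,1\}^{m+n}$ is computable in polynomial time.
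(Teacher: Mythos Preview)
Your proposal is correct and follows essentially the same approach as the paper: enumerate the $2^{|S^-|}$ fixings of the variables in $S^-$, observe that each residual instance has a non-negative quadratic cost matrix, solve that residual instance in polynomial time, and for the hardness direction pad with zero rows and columns so that $|S^-|$ is unchanged while the dimensions blow up. The only cosmetic difference is that the paper dispatches the non-negative subcase by a direct citation to the selection/min-cut formulation of Rhys, whereas you justify it via supermodularity of $\sum_{ij} q_{ij}x_iy_j$ with $q_{ij}\ge 0$; these are the same underlying fact, so no substantive divergence.
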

\begin{proof}
Suppose $|S^-|$ is $ O(\log n)$.  Fixing the variables $x_i$ for $i\in I^-$ and $y_j$ for $j\in J^-$ at 0-1 values results in a reduced BQP01 with cost matrix have non-negative elements. Such a BQP01 can be solved as a minimum cut problem~\cite{r3}. Since there are at most $2^{|S^-|}$ ways to fix the variables associated with $I^-$ and $J^-$, BQP01 can be solved in polynomial time if $|S^-|=O(\log n)$.

The second part of the theorem can be proved by  reducing a general BQP01 to a BQP01 satisfying the conditions of the theorem. This can be achieved by increasing the number of  columns (or rows) of $Q$ to a sufficiently large number, yet polynomial for fixed $k$ and filling these columns (rows) with entries 0.
\end{proof}

It may be noted that Theorem~\ref{thfq} allows arbitrary $c$ and $d$.

\subsection{Fixed rank cost matrix}

QP01 is polynomially solvable if $Q^{\prime}$ is a symmetric positive semidefinite matrix~\cite{all,fer,kar,mal} with fixed rank and $c^{\prime}=0$. If $c^{\prime}$ is allowed to have arbitrary elements, the problem is NP-hard even if the rank of $Q^{\prime}$ is one~\cite{plh}.   We now show that BQP01 is solvable in polynomial time if rank of  $Q$ is fixed. No assumption is made on any other property of $Q$ such as positive semidefiniteness  and no restrictions on $c$ and $d$ are imposed. Our algorithm was inspired by~\cite{kky}.   First, let us prove some preliminary results.

Consider the multiparametric linear programming problem (MLP)
 \begin{align*}
&f(\lambda)=\max  cx &\\
&\text{Subject to: } Ax = \lambda&\\
 &\hspace{2.0cm}x \in [0,1]^m,&
\end{align*}

\noindent where $A$ is a $p\times m$ matrix of full row rank and $\lambda^T=(\lambda_1,\lambda_2,\ldots ,\lambda_p)\in R^p$. The $j^{\mbox{th}}$ column of the matrix $A$ is denoted by $A_j$. A partition $(\cl{B,L,U})$  of $I=\{1,2,\ldots ,m\}$ with $|\cl{B}|=p$ is referred to as a {\it basis structure} for MLP. For each basic feasible solution of MLP, a basis structure $(\cl{B,L,U})$ is associated, where $\cl{L}$ is the index set of nonbasic variables at the lower bound $0$,  $\cl{U}$ is the index set of non-basic variables at the upper bound $1$ and  $\cl{B}=\{B1,B2,\ldots, Bp\}$ is the index set of basic variables. Let $B$ be the $p\times p$ matrix with its $i^{\mbox{th}}$ column $A_{Bi}$. Then  the set $\cl{B}$ is called a basis set or simply a {\it basis} and $B$ is the associated {\it basis matrix}. A basis provides an implicit ordering of its elements. Thus, $x_{Bi}$ is called the $i^{\mbox{th}}$ basic variable with respect to $\cl{B}$.   Let $C_B=(c_{B1},c_{B2},\ldots ,c_{Bp})$. Then the basis structure $(\cl{B,L,U})$ is dual feasible if and only if~\cite{murty}
\begin{align}\label{redu}
C_BB^{-1}A_j-c_j \geq & 0 \mbox{ for } j\in \cl{L}\\ \label{red1}
C_BB^{-1}A_j-c_j \leq & 0 \mbox{ for } j\in \cl{U}.
\end{align}
Conditions (\ref{redu}) and (\ref{red1}) are also known as {\it reduced cost optimality conditions}~\cite{murty}. Let $(\cl{B,L,U})$ be a dual feasible basis structure for MLP\@. Then the basic solution of MLP corresponding to this basis structure is optimal for all $\lambda\in R^p$ satisfying
\begin{align}\label{feas}
\mathbf{0} \leq B^{-1}\lambda - B^{-1}A^u\textbf{v} \leq \mathbf{1},
\end{align}
where $A^u$ is the submatrix formed by columns of $A$ corresponding to the indices of $\cl{U}$, $\textbf{v}$ is a vector of size $|\cl{U}|$ with all entry equal to 1, $\mathbf{0}$ is the zero vector in $R^p$ and $\mathbf{1}$ is the vector in $R^p$ with all entries equal to 1. The polyhedral set represented by (\ref{feas}) is called the {\it characteristic region} of the basis structure $(\cl{B,L,U})$. A dual feasible basis structure $(\cl{B,L,U})$ is {\it dual non-degenerate} if (\ref{redu}) and (\ref{red1}) are satisfied as strict inequalities. If any of these inequalities is satisfied as an equality, then $(\cl{B,L,U})$ is a {\it dual degenerate} basis structure.

Let $S$ be the collection of all the extreme points of the characteristic regions associated with all dual feasible basis structures of MLP.
\begin{lemma}\label{lm7}
$|S| \leq {}^m\cl{C}_p 2^p$.
\end{lemma}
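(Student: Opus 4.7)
The plan is a direct enumeration: first show that each characteristic region has at most $2^p$ extreme points, then show that the number of distinct bases $\cl{B}$ is at most ${}^m\cl{C}_p$, and combine.

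For the first part, observe that under the change of variables $z = B^{-1}\lambda - B^{-1}A^u\textbf{v}$, the characteristic region (\ref{feas}) becomes exactly the unit cube $z \in [0,1]^p$. Since $B$ is invertible this is an affine bijection, so the characteristic region is a parallelepiped in $R^p$ whose extreme points are in one-to-one correspondence with the $2^p$ vertices of $\{0,1\}^p$; concretely, each one is $\lambda^* = Bz^* + A^u\textbf{v}$ for some $z^* \in \{0,1\}^p$, equivalently $\lambda^* = Ax^*$ for the 0-1 vector $x^*$ with $x^*_{\cl{B}} = z^*$, $x^*_{\cl{L}} = \mathbf{0}$, and $x^*_{\cl{U}} = \mathbf{1}$.

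For the second part, there are at most ${}^m\cl{C}_p$ size-$p$ subsets of $\{1,\ldots,m\}$ whose columns in $A$ are linearly independent and hence eligible as bases. Once $\cl{B}$ is fixed, the dual vector $\pi = C_B B^{-1}$ is determined, and the reduced cost optimality conditions (\ref{redu})--(\ref{red1}) force each non-basic $j$ into $\cl{L}$ (when $\pi A_j - c_j > 0$) or $\cl{U}$ (when $\pi A_j - c_j < 0$). Under dual non-degeneracy no reduced cost vanishes, the split $(\cl{L},\cl{U})$ is uniquely determined, and each basis supports exactly one dual-feasible basis structure contributing at most $2^p$ extreme points. Multiplying the two bounds immediately gives $|S| \leq {}^m\cl{C}_p \cdot 2^p$ in the non-degenerate case.

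The main obstacle is dual degeneracy: when a reduced cost equals zero, the corresponding index may be assigned to either $\cl{L}$ or $\cl{U}$, so a single basis can underlie multiple dual-feasible basis structures whose characteristic regions contribute potentially distinct $2^p$-vertex sets. I would handle this by a canonical tie-breaking rule, e.g.\ assigning each extreme-point primal vector $x^* \in \{0,1\}^m$ the lexicographically smallest basis $\cl{B}$ that admits it, with the effect that whenever $\cl{B}$ is canonical no degenerate non-basic index can be pivoted in to produce a smaller basis, so the canonical $\cl{B}$ uniquely determines $x^*$ on $\cl{B}^c$ and hence hosts at most $2^p$ such vectors. Alternatively, a generic infinitesimal perturbation $c \mapsto c + \varepsilon c''$ removes all ties and recovers the non-degenerate bound, which is inherited as $\varepsilon \to 0^+$. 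Either route preserves $|S| \leq {}^m\cl{C}_p \cdot 2^p$.
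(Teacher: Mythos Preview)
Your argument is essentially the paper's: perturb $c$ to remove dual degeneracy, note that each of the at most ${}^m\cl{C}_p$ bases then supports a unique dual-feasible structure, and count the $2^p$ vertices of each characteristic region. Your cube-bijection for the $2^p$ count is just a cleaner phrasing of the paper's ``choose $p$ tight constraints from~(\ref{feas}).''

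One caution on your tie-breaking alternative for the degenerate case: it does not work as written. Take $p=1$, $A=(1,2,4)$, $c=A$. Every reduced cost vanishes, so \emph{every} partition $(\cl{L},\cl{U})$ is dual feasible for \emph{every} basis, and all eight vectors $x^*\in\{0,1\}^3$ are admitted by the lex-smallest basis $\{1\}$, which therefore hosts $8>2=2^p$ of them, contrary to your claim. In fact here $S=\{0,1,\dots,7\}$, so $|S|=8>6={}^m\cl{C}_p 2^p$ and the stated bound genuinely fails under degeneracy; the perturbation should be read as \emph{replacing} $S$ by its non-degenerate counterpart (which is what the downstream algorithm actually enumerates) rather than as leaving $|S|$ unchanged. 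Your perturbation branch matches the paper, so you can simply drop the tie-breaking paragraph.
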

\begin{proof}
For simplicity, we assume that all dual feasible basis structures of MLP are dual non-degenerate. This is not a restriction since we can achieve this by an $\epsilon$-perturbation of the cost vector $c$ in MLP by an appropriately small $\epsilon$ and this change will not underestimate $|S|$. Thus, the inequalities (\ref{redu}) and (\ref{red1}) are satisfied as strict inequalities. Consequently, given $\cl{B}$, the choice of $\cl{L}$ and $\cl{U}$ is unique for any dual feasible basis structure, i.e., if $(\cl{B},\cl{L}_1,\cl{U}_1)$ and $(\cl{B},\cl{L}_2,\cl{U}_2)$ are two dual non-degenerate basis structures for MLP then at most one of them can  be dual feasible.    Thus, there exist at most ${}^m\cl{C}_p$ dual feasible and dual non-degenerate basis structures for MLP. The characteristic region of such a basis structure $(\cl{B,L,U})$ is defined by the inequalities (\ref{feas}). Each extreme point of this polyhedron is determined by the unique solution of $p$ tight inequalities from (\ref{feas}) that are satisfied as equalities. Since there are exactly $2^p$ choices for these tight inequalities, the result follows.
\end{proof}

Note that $f({\lambda})$ is a piecewise linear concave function when $\lambda \in R^p$~\cite{gal}. It is linear when $\lambda$ is restricted to a characteristic region associated with any dual feasible basis structure $(\cl{B,L,U})$.  These extreme points are called {\it breakpoints} of $f(\lambda)$. Thus, $f(\lambda)$ will have at most  ${}^m\cl{C}_p 2^p$ breakpoints.

Let $S(\cl{B,L,U})$ be the collection of all extreme points of the characteristic region associated with $\cl{(B,L,U)}$. Then, using inequalities (\ref{feas}), it can be verified that
\begin{align}\label{sblu}
S(\cl{B,L,U}) & = \{\lambda_{\tau} : \lambda_{\tau}=B\tau + A^u\textbf{v}, \tau \in \{0,1\}^p\}
\end{align}
where $\textbf{v}$ is the all-one vector of size $|\cl{U}|$. Let $\bar{S}(\cl{B,L,U})$ be the collection of all basic feasible solutions of MLP associated with the extreme points in $S(\cl{B,L,U})$.

\begin{theorem}\label{zeroone}$\bar{S}(\cl{B,L,U})\subseteq \{0,1\}^m$ and $|\bar{S}(\cl{B,L,U})|=2^p$.
\end{theorem}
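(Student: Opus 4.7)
The plan is a direct unpacking of the definitions, because once the characteristic region description (\ref{sblu}) is in hand the claim is essentially a one-line computation.

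First I would write down, for the basis structure $(\cl{B,L,U})$, the explicit form of the basic feasible solution $x(\lambda)$ associated with a given right-hand side $\lambda$: namely $x_j = 0$ for $j\in\cl{L}$, $x_j = 1$ for $j\in\cl{U}$, and the basic part $x_{\cl{B}}$ determined by the equality constraint $Bx_{\cl{B}} + A^u\mathbf{v} = \lambda$, which yields $x_{\cl{B}} = B^{-1}(\lambda - A^u\mathbf{v})$.

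Next I would substitute the formula from (\ref{sblu}), $\lambda_\tau = B\tau + A^u\mathbf{v}$ with $\tau\in\{0,1\}^p$, into this expression. The cancellation is immediate: $x_{\cl{B}} = B^{-1}(B\tau + A^u\mathbf{v} - A^u\mathbf{v}) = \tau$. Hence the basic components are exactly the 0-1 vector $\tau$, while the non-basic components are $0$ or $1$ by construction; together this shows that every element of $\bar{S}(\cl{B,L,U})$ lies in $\{0,1\}^m$.

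Finally I would verify the cardinality count. The map $\tau\mapsto x(\lambda_\tau)$ is injective, because two distinct $\tau,\tau'\in\{0,1\}^p$ produce distinct basic parts ($\tau$ versus $\tau'$) while the non-basic parts are the same fixed 0-1 pattern determined by $(\cl{L},\cl{U})$. Since $|\{0,1\}^p|=2^p$, we conclude $|\bar{S}(\cl{B,L,U})|=2^p$. There is no real obstacle here; the only mild subtlety is being explicit that $x(\lambda_\tau)$ is genuinely \emph{feasible} for MLP when $\lambda = \lambda_\tau$, which follows because $\lambda_\tau$ lies in the characteristic region (\ref{feas}) and hence $B^{-1}(\lambda_\tau - A^u\mathbf{v})\in[0,1]^p$, consistent with $\tau\in\{0,1\}^p$.
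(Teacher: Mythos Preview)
Your proof is correct and follows essentially the same approach as the paper: substitute the parametrization $\lambda_\tau = B\tau + A^u\mathbf{v}$ from (\ref{sblu}) into the basic-variable formula $x_{\cl B}=B^{-1}\lambda - B^{-1}A^u\mathbf v$ to obtain $x_{\cl B}=\tau$, observe the non-basic components are $0$/$1$ by definition, and then count via the $2^p$ choices of $\tau$. The only differences are cosmetic---you spell out the injectivity and feasibility checks a bit more explicitly than the paper does.
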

\begin{proof}
Let $x(\lambda)\in \bar{S}(\cl{B,L,U})$ be a basic feasible solution for MLP corresponding to the extreme point $\lambda \in S(\cl{B,L,U})$ and $B$ be the basis matrix associate with $(\cl{B,L,U})$. Let $x^{\lambda}_B$ be the vector of basic variables of $x(\lambda)$. From (\ref{sblu}), $\lambda=B\tau + A^u\textbf{v}$ for some $\tau \in \{0,1\}^p$ and hence
\begin{align}\label{bv}
x^{\lambda}_B=B^{-1}\lambda - B^{-1}A^u\textbf{v}=\tau.
\end{align}
The non-basic variables of $x(\lambda)$ by definition take 0-1 values and hence $x(\lambda)\in \{0,1\}^m$. Thus $x(\lambda) \subseteq \bar{S}(\cl{B,L,U})$.  Since there are $2^p$ choices for $\tau$ in equation (\ref{bv}) and nonbasic variables in $\cl{L}$ and $\cl{U}$ are fixed for a given $\cl{(B,L,U)}$ (independent of the choice of $\tau$), we have $|\bar{S}(\cl{B,L,U})|=2^p$.
 \end{proof}

 Let us now consider BQP01 where rank of $Q$ is $p$, a fixed number. We assume that $Q$ is given in the rank-$p$ factorized form. i.e. $Q=AB$ where $A$ is an $m\times p$ matrix and $B$ is a $p\times n$ matrix. Such a factorization can easily be constructed from the reduced row echelon form $Q^R$ of $Q$ by choosing $A$ as the matrix obtained from $Q$ by deleting non-pivot columns and $B$ as the matrix obtained from $Q^R$ by removing the zero rows.  Let $a^k=(a^k_1,a^k_2,\ldots ,a^k_m)$ be the $k^{\mbox{th}}$ column of $A$ and $b^k=(b^k_1,b^k_2,\ldots ,b^k_n)$ be the $k^{\mbox{th}}$ row of $B$.   Since BQP01 is equivalent to BQPC, this problem can be stated as
  \begin{flalign*}
\mbox{BQPC(p):\hspace{1cm} }&\mbox{ Maximize } \sum_{k=1}^pa^kxb^ky+cx+dy &\\
&\text{Subject to: } x \in [0,1]^m, y\in [0,1]^n&.
\end{flalign*}
 Consider the multiparametric linear program~\cite{gal}
 \begin{flalign*}
\mbox{MLP1:\hspace{1cm} }&h_1(\lambda)=\max  cx &\\
&\text{Subject to: } a^kx = \lambda_k \mbox{ for } k=1,2,\ldots ,p&\\
 &\hspace{2.0cm}x \in [0,1]^m,&
\end{flalign*}
\noindent where $\lambda=(\lambda_1,\lambda_2,\ldots ,\lambda_p)$. Then $h_1(\lambda)$ is a piecewise linear concave function~\cite{gal}. Let $S_1$ be the set of breakpoints of $h_1(\lambda)$ and $x(\lambda)$ be an optimal basic feasible solution of MLP1 at $\lambda\in S_1$. By Theorem~\ref{zeroone}, $x(\lambda)\in \{0,1\}^m$. Let $y(\lambda)=(y_1(\lambda),y_2(\lambda), \ldots ,y_n(\lambda))$ be an optimal solution to BQPC(p) when $x$ is restricted to $x(\lambda)$. Then it can be verified that $y(\lambda)$ satisfies
\begin{align}\label{opty}
y_j(\lambda) = \begin{cases} 1 & \mbox{ if } d_j+\sum_{k=1}^pb^k_j\sum_{i=1}^ma^k_ix_i(\lambda) > 0 \\
0 & \mbox{otherwise }.
\end{cases}
\end{align}

\begin{theorem}\label{opt}There exists an optimal solution to BQPC(p) amongst the solutions $\{(x(\lambda),y(\lambda)): \lambda \in S_1\}$.\end{theorem}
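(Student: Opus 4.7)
The plan is to show that any optimal solution of BQPC($p$) can be replaced by one of the form $(x(\lambda),y(\lambda))$ with $\lambda\in S_1$, by first showing that an optimal $x$ (for any fixed $y$) can be taken at a breakpoint of $h_1$, and then observing that the corresponding $y(\lambda)$ is at least as good as the original $y$.

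First, I would fix an optimal solution $(x^*,y^*)$ of BQPC($p$). Since the objective is multilinear and $[0,1]^m\times[0,1]^n$ is a product of unit cubes, I may assume $y^*\in\{0,1\}^n$. With $y^*$ fixed, the residual problem in $x$ is a linear program on $[0,1]^m$:
\begin{equation*}
g(x) \;=\; \sum_{k=1}^p (b^k y^*)\,(a^k x) + cx + d y^*.
\end{equation*}
Setting $\beta_k = b^k y^*$ and introducing $\lambda_k = a^k x$, this can be rewritten as the two-level optimization
\begin{equation*}
\max_{x\in[0,1]^m} g(x) \;=\; d y^* + \max_{\lambda\in \Lambda}\Bigl[\,\sum_{k=1}^p \beta_k \lambda_k + h_1(\lambda)\Bigr],
\end{equation*}
where $\Lambda = \{Ax : x\in[0,1]^m\}$ is the image polytope and $A$ has columns $a^1,\dots,a^p$.

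Next, I would argue that the inner maximum is attained at some $\lambda^*\in S_1$. The function $h_1(\lambda)$ is piecewise linear concave and, by construction, linear on each characteristic region of MLP1. Adding the linear term $\sum_k \beta_k \lambda_k$ preserves this property, so the whole bracketed expression is linear on each characteristic region and concave overall on $\Lambda$. Hence its maximum over $\Lambda$ is attained at an extreme point of some characteristic region, i.e.\ at a breakpoint $\lambda^*\in S_1$. At $\lambda^*$, the basic feasible solution $x(\lambda^*)$ is optimal for MLP1 and, by Theorem~\ref{zeroone}, lies in $\{0,1\}^m$. Consequently $g(x(\lambda^*))$ equals the maximum of $g$, so $f(x(\lambda^*),y^*)\ge f(x^*,y^*)$.

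Finally, with $x=x(\lambda^*)$ fixed, the objective of BQPC($p$) is linear in $y$ and the closed-form rule~(\ref{opty}) gives the maximizing $y(\lambda^*)\in\{0,1\}^n$. Therefore
\begin{equation*}
f\bigl(x(\lambda^*),\,y(\lambda^*)\bigr)\;\ge\;f\bigl(x(\lambda^*),\,y^*\bigr)\;\ge\;f(x^*,y^*),
\end{equation*}
so $(x(\lambda^*),y(\lambda^*))$ is itself optimal. The only delicate step is the second one: verifying that the maximum of a piecewise linear concave function on $\Lambda$ plus a linear term is attained at a vertex of one of the characteristic regions, i.e.\ at an element of $S_1$. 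Once that is pinned down, everything else is a direct consequence of Theorem~\ref{zeroone} and the optimality formula~(\ref{opty}).
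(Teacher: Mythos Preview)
Your proof is correct and reaches the same conclusion as the paper, but via a slightly different decomposition. The paper introduces a second parametric problem $h_2(\lambda)=\max_{y\in[0,1]^n}\bigl(\sum_k\lambda_k b^k y+dy\bigr)$, writes the optimal value as $h(\lambda)=h_1(\lambda)+h_2(\lambda)$, and then argues that $h$ is \emph{convex} on each characteristic region of $h_1$ (linear plus convex), so its maximum occurs at an extreme point of such a region. You instead fix an optimal $y^*$ first, which replaces $h_2$ by the purely linear term $\sum_k(b^k y^*)\lambda_k$; the resulting function is \emph{linear} on each characteristic region, and you conclude directly that a linear function on a polytope is maximized at a vertex. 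Your route is a bit more elementary (no auxiliary convex function is needed) and the final improvement step $y^*\to y(\lambda^*)$ is cleanly separated out. One small remark: in your ``Hence'' sentence, the operative fact is linearity on each characteristic region, not the overall concavity---a concave function alone need not be maximized at an extreme point, so you may want to make explicit that you are taking the maximum over each region (where the function is linear) and then the best over all regions.
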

\begin{proof}
BQPC(p) is equivalent to the bilinear program
\begin{flalign*}
\mbox{BLP1: \hspace{1cm}}&\mbox{ Maximize }\sum_{k=1}^p\lambda_kb^ky + cx +dy&\\
&\text{Subject to: } a^kx = \lambda_k \mbox{ for } k=1,2,\ldots ,p&\\
 &\hspace{2.0cm}x \in [0,1]^m, y\in [0,1]^n,\lambda=(\lambda_1,\lambda_2,\ldots ,\lambda_p)^T\in R^p.&
\end{flalign*}
Let $h(\lambda)$ be the optimal objective function value of BLP1 where $\lambda$ is fixed. Now, $h(\lambda)$ can be decomposed into $h_1(\lambda)+h_2(\lambda)$, where
\begin{align*}
&h_2(\lambda)=\max  \sum_{k=1}^p\lambda_kb^ky + dy &\\
&\text{Subject to: } y \in [0,1]^n,
\end{align*}
and $h_1(\lambda)$ is as defined in MLP1. Then the optimal objective function value of BQPC(p) can be identified by finding the global maximum of $h(\lambda)$ over all $\lambda \in R^p$ for which BLP1 is feasible. As a function of $\lambda$, $h_2(\lambda)$ is piecewise linear convex~\cite{gal} and $h_1(\lambda)$ is piecewise linear concave~\cite{gal}. Thus, $h(\lambda)$ is piecewise linear but need not be convex or concave. But $h_1(\lambda)$ is linear when $\lambda$ is restricted to any characteristic region of MLP1. Thus, $h(\lambda)$ is convex when $\lambda$ is restricted to  any characteristic region associated with $h_1(\lambda)$. Hence, the maximum of $h(\lambda)$ is attained at a breakpoint of $h_1(\lambda)$. Let $\lambda=\lambda^0$ be such a breakpoint  and  $x(\lambda^0)$ be an optimal basic feasible solution of MLP1 at $\lambda=\lambda^0$. By Theorem~\ref{zeroone}, $x(\lambda^0)$ is a 0-1 solution and it yields the optimal value $y(\lambda)$ as given by (\ref{opty}) for the $y$ variables for BQPC(p) subject to the condition that $x=x^0(\lambda)$. Since one of the $x(\lambda)$ for $\lambda \in S_1$ gives an optimal $x$-value, the result follows.
\end{proof}

Based on Theorem~\ref{opt}, BQPC(p) and hence BQP01 can be solved by generating the set $S_1$, computing the set $S_2=\{(x(\lambda),y(\lambda)) : \lambda \in S_1\}$ and choosing the best solution in $S_2$.  Note that we do not need to compute explicitly the set $S_1$. The solution set $\{x(\lambda) : \lambda \in S_1\}$ can be identified without computing $\lambda$. Let $\cl{(B,L,U)}$ be a dual feasible and dual non-degenerate basis structure. Let $\cl{B}=\{B1,B2,\ldots ,Bp\}$ and $B$ be the associated basis matrix. For each $\tau\in \{0,1\}^p$, we get an extreme point $\lambda^{\tau}=Bw + A^u\textbf{v}$ of its characteristic region. This follows from the inequality (\ref{feas}). But then, from equation (\ref{bv}), the corresponding basic variables $x(\lambda^{\tau})_B$ is precisely $\tau$. The non-basic variables are fixed at 0/1 values guided by $\cl{L}$ and $\cl{U}$. Thus, for each basis structure $\cl{(B,L,U)}$ we can generate $2^p$ basic feasible solutions corresponding to its extreme points by varying $\tau\in \{0,1,\}^p$ (see proof of Theorem~\ref{zeroone}).  For each such basic feasible solution $x$  we can compute the corresponding $y$ value easily (see equation~(\ref{opty})). Below we present a high-level description of our algorithm for solving BQPC(p): \\

\begin{tabular}{lp{13.5cm}}
\textbf{Step 1:} & Let $A$ be the coefficient matrix of MLP1 and $\Gamma$ be the collection of all dual feasible basis structures associated $A$.\\
\textbf{Step 2:} & For each basis structure $(\cl{B,L,U})\in \Gamma$, construct the set of optimal basic feasible solutions corresponding to the extreme points of its characteristic region. Let $\bar{S}$ be the collection of all such solutions obtained. \\
\textbf{Step 3:} & For each each $x\in \bar{S}$  compute the best $y\in R^n$, say $y^x$. Let $S_2=\{(x,y^x) : x \in \bar{S}\}$.\\
\textbf{Step 4:} & Output the best solution in $S_2$. \\
\end{tabular}
~\vspace{7pt}

 \noindent There are ${}^m\cl{C}_p$ choices for $\cl{B}$ and for each such choice, there is a unique allocation of non-basic variables to $\cl{L}$ and $\cl{U}$ at zero or one values. (The uniqueness follows from dual non-degeneracy assumption which can be achieved by appropriate perturbation of the cost vector.) The basis inverse can be obtained in $O(p^3)$ time and given this inverse, $\cl{L}$ and $\cl{U}$ can be identified in $O(mp^2)$ time so that $(\cl{B,L,U})$ is dual feasible. Thus $|\Gamma|\leq {}^m\cl{C}_p$ and $\Gamma$ can be identified in $O({}^m\cl{C}_p(p^3+mp^2)$ time.  The characteristic region associated with each $(\cl{B,L,U})\in \Gamma$ has at most $2^p$ extreme points  and the optimal solution to MLP1 when $\lambda$ is fixed at these extreme points can be identified as discussed in the proof of Theorem~\ref{zeroone} without explicitly computing $\lambda$.  Thus, given $\Gamma$, $\bar{S}$ in Step 2 can be identified in $O({}^m\cl{C}_p2^pm)$ time. For each $x \in \bar{S}$ we can compute the corresponding optimal $y^x$ in $O(mnp)$ time using equation (\ref{opty}). The best solution in $S_2$ can be identified in $O({}^m\cl{C}_p2^p(mnp))$ time. Summarizing the foregoing discussions, we have,

 \begin{theorem}\label{comp}BQP01 can be solved in $O({}^m\cl{C}_p2^pmnp)$ time when rank of $Q=p$ and $Q$ is given in the rank factored form.\end{theorem}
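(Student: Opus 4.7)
My plan is to show correctness and complexity of the four-step algorithm separately, since correctness is essentially free from Theorem~\ref{opt} and the work lies in auditing the running time of each step.

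For correctness I would simply invoke Theorem~\ref{opt}: an optimal solution to BQPC(p) lies in the set $\{(x(\lambda),y(\lambda)) : \lambda\in S_1\}$, where $S_1$ is the set of breakpoints of $h_1(\lambda)$. By the discussion preceding the algorithm (together with Lemma~\ref{lm7} and Theorem~\ref{zeroone}), enumerating every dual feasible basis structure $(\cl{B,L,U})$ of MLP1 and, for each of them, the $2^p$ basic feasible solutions obtained by letting $\tau$ range over $\{0,1\}^p$ in $x^{\lambda}_B=\tau$, produces a superset $\bar{S}$ of $\{x(\lambda):\lambda\in S_1\}$. Then, for each $x\in \bar{S}$, the associated optimal $y^x$ is given in closed form by equation (\ref{opty}). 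Hence the best element of $S_2$ is optimal for BQP01.

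For the complexity I would walk through the steps. In Step~1, $\cl{B}$ can be chosen in at most ${}^m\cl{C}_p$ ways; after an $\epsilon$-perturbation of $c$ ensuring dual non-degeneracy (as in the proof of Lemma~\ref{lm7}), the partition of non-basic indices into $\cl{L}$ and $\cl{U}$ is uniquely determined by the signs of the reduced costs. For each candidate $\cl{B}$, computing $B^{-1}$ takes $O(p^3)$, and then checking the reduced-cost sign for each of the $m-p$ non-basic columns takes $O(mp^2)$; so $\Gamma$ is produced in $O({}^m\cl{C}_p(p^3+mp^2))$ time, with $|\Gamma|\le{}^m\cl{C}_p$. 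In Step~2, for each $(\cl{B,L,U})\in\Gamma$ and each of the $2^p$ values of $\tau$, equation (\ref{bv}) gives the basic variables directly as $\tau$, while $\cl{L},\cl{U}$ fix the rest; writing out each solution costs $O(m)$, for a total of $O({}^m\cl{C}_p 2^p m)$. In Step~3, for a given $x\in\bar{S}$, the quantity $\sum_i a^k_i x_i$ for each $k$ costs $O(m)$, then $\sum_k b^k_j(\sum_i a^k_i x_i)$ per $j$ costs $O(p)$ over $n$ indices, giving $O(mp+np)=O(mnp)$ per $x$, and $O({}^m\cl{C}_p 2^p mnp)$ overall. Step~4 is a straightforward scan costing $O({}^m\cl{C}_p 2^p)$, which is dominated.

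Summing these, the $O({}^m\cl{C}_p 2^p mnp)$ term from Step~3 dominates, giving the claimed bound. The one subtlety I would flag is the use of the non-degeneracy perturbation: I need to be sure it does not change the optimal value of BQPC(p) and does not shrink $\bar{S}$ below what is needed for Theorem~\ref{opt}. Since the perturbation only refines the set of breakpoints considered (and Theorem~\ref{opt} gives a solution at a breakpoint that remains representable after infinitesimal perturbation), this is safe; nevertheless this is the one step where I would be most careful to avoid a hidden gap.
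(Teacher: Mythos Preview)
Your proposal is correct and follows essentially the same approach as the paper: correctness via Theorem~\ref{opt}, then the same step-by-step complexity audit ($O({}^m\cl{C}_p(p^3+mp^2))$ for Step~1, $O({}^m\cl{C}_p 2^p m)$ for Step~2, and $O({}^m\cl{C}_p 2^p mnp)$ dominating from Step~3). Your breakdown of Step~3 as $O(mp+np)$ per $x$ is in fact slightly sharper than the paper's stated $O(mnp)$, and your caution about the perturbation is reasonable though the paper does not elaborate on it either.
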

Note that for a fixed $p$ the above bound is polynomial and for $p=O(\log n)$ it is quasi-polynomial.  For specific choices of $p$, the complexity of the procedure discussed above may be improved. This is illustrated in the next subsection for the case when $p=1$.

\subsection{Rank one cost matrix}

 Theorem~\ref{comp} guarantees that if rank of $Q$ is 1, BQP01 can be solved in $O(m^2n)$ time. We now show that the problem can be solved in $O(n\log n)$ time by careful organization of our computations. Recall that $m\leq n$. As in the general case, let us consider the bilinear equivalent version:
\begin{flalign*}
\text{BQPC(1):\hspace{1cm} }&\text{Maximize } axby+cx+dy&\\
&\text{Subject to: }  x \in [0,1]^m, y\in [0,1]^n,&
\end{flalign*}
where $a=(a_1,a_2,\ldots ,a_m),c=(c_1,c_2,\ldots ,c_m) \in R^m$ and $b=(b_1,b_2,\ldots ,b_n),d=(d_1,d_2,\ldots ,d_n)\in R^n$. Let $A^-=\{i : a_i < 0\}$ and $A^+=\{i : a_i >0\}$. Define $\underline{\lambda}=\sum_{i\in A^-}a_i$ and $\overline{\lambda}=\sum_{i\in A^+}a_i$, where summation over the empty set is taken as zero.  Note that $\underline{\lambda}$ and $\overline{\lambda}$ are respectively the smallest and the largest values of $ax$ when  $x\in [0,1]^m$. Consider the {\it parametric continuous knapsack problem} (PKP($\lambda$)) given below.
\begin{flalign*}
\text{PKP$(\lambda)$:\hspace{1cm} }&\text{Maximize   } cx&\\
&\text{Subject to: } ax = \lambda&\\
 &\hspace{2.0cm}x \in [0,1]^m, \mbox{ and } \underline{\lambda} \leq \lambda \leq \overline{\lambda}.&
\end{flalign*}
This is a special case of MLP1 for $p=1$. Let $h_1(\lambda)$ be the optimal objective function value of PKP($\lambda$) for a given $\lambda$. Then for $\underline{\lambda} \leq \lambda \leq \overline{\lambda},$ $h_1(\lambda)$ is a piecewise linear concave function~\cite{murty}. Let $\underline{\lambda}=\lambda_1 < \lambda_2 < \cdots <\lambda_s=\overline{\lambda}$ be the breakpoints of $h_1(\lambda),$  $x^k$ be an optimal basic feasible solution of PKP($\lambda$) for $\lambda\in [\lambda_{k},\lambda_{k+1}], 1\leq k \leq s-1$, and $x^s$ be an optimal basic feasible solution to PKP($\lambda^s$). Let $y^k$ be an optimal solution to BQPC(1) when $x$ is restricted to $x^k$, the vector $y^k$ can be identified by appropriate modification of the equation (\ref{opty}). By Theorem~\ref{opt}, there exists an optimal solution to BQPC(1) amongst the solutions $x^k,y^k : k=1,2,\ldots ,s.$

From Lemma~\ref{lm7}, the number of breakpoints of $h_1(\lambda)$ is at most $2m$. We now observe that the number of breakpoints of $h_1(\lambda)$ cannot be more than $m+1$ and obtain closed form values of these breakpoints. \\

 Let $\displaystyle{T=\left\{\frac{c_i}{a_i} : i=1,2,\ldots ,m,\  a_i\neq 0\right \}}$ and consider a descending arrangement
 \begin{align}\label{daa}
 \frac{c_{\pi_1}}{a_{\pi_1}} > \frac{c_{\pi_2}}{a_{\pi_2}} > \cdots > \frac{c_{\pi_r}}{a_{\pi_r}}
 \end{align}
  of all distinct elements of $T$. Let $\displaystyle{T(k) = \left\{i : \frac{c_{\pi_k}}{a_{\pi_k}}=\frac{c_i}{a_i}\right\}}$. Then the breakpoints of $h_1(\lambda)$ are given by
  \begin{align*}
  \lambda_1=\underline{\lambda}\mbox{ and }\lambda_{k+1}=\lambda_{k}+\sum_{i\in T(k)}|a_i| \mbox{ for } k=1,2,\ldots, r.
  \end{align*}

An optimal solution to PKP($\lambda$) at $\lambda=\lambda_k$ for $k=1,2,\ldots ,r+1$  can be identified recursively as

\begin{minipage}{6.0cm}
\begin{align*}
x^1_i = \begin{cases} 1 &\mbox{ if } a_i=0 \mbox{ and } c_i > 0 \mbox{ or } a_i < 0 \\
0 & \mbox{ otherwise }
\end{cases}
\end{align*}
\end{minipage}
\begin{minipage}{6.0cm}
\begin{align*}
\mbox{ and }\;\;x^{k+1}_i = \begin{cases} x^{k}_i &\mbox{ if } i\notin T(k) \\
1 & \mbox{ if } i\in T(k) \mbox{ and } a_i > 0\\
0 & \mbox{ otherwise}.
\end{cases}
\end{align*}
\end{minipage}


 Thus, it can be verified that given $h(\lambda_k)$ and $x^k$, $h(\lambda_{k+1})$ and $x^{k+1}$ can be identified in $O(|T(k)|)$ time. The complexity for generating these solutions and breakpoints are dominated by that of constructing the descending arrangement (\ref{daa}) which is $O(m\log m)$. Note that $h_1(\lambda)$ has at most $m+1$ breakpoints and given $x^{k}$, a corresponding solution $y^{k}$  can be computed in $O(n)$ time. This leads to a complexity of $O(mn)$. The bottleneck operation here is the computation of $y^k$ for $k=1,2,\ldots ,r$. We now show that these points can be identified in $O(n\log n)$ time.

 Consider the {\it parametric unconstrained linear optimization problem}
\begin{flalign*}
\text{ULP$(\mu)$: \hspace{1cm} }&\text{Maximize   } dy+\mu by&\\
&\text{Subject to: } y \in [0,1]^n\mbox{ and } \underline{\lambda} \leq \mu \leq \overline{\lambda}.&
\end{flalign*}

\noindent Let $h_2(\mu)$ be the optimal objective function value of ULP($\mu$). Then $h_2(\mu)$ is a piecewise linear convex function.

Let $S^+=\{j : d_j + \underline{\lambda} b_j \geq 0\}$,  $S^-=\{j : d_j + \underline{\lambda} b_j < 0\}$, $B^+=\{j : b_j > 0\}$, and $B^-=\{j : b_j < 0\}$. Also, let $T_2=\{\minus~\frac{d_j}{b_j} : j \in B^+\cup B^-\}$ and consider an ascending arrangement

\begin{align}\label{ar}
 \underline{\lambda} < \minus~\frac{d_{\tau_1}}{b_{\tau_1}} < \minus~\frac{d_{\tau_2}}{b_{\tau_2}} \cdots <\minus~\frac{d_{\tau_t}}{b_{\tau_t}}
 \end{align}

\noindent of all distinct elements of $T_2$ greater than $\underline{\lambda}$.
 Let $\mu_1,\mu_2, \ldots ,\mu_t$  be the breakpoints of $h_2(\mu)$. Then $\mu_{\ell}=\minus~\frac{d_{\tau_{\ell}}}{b_{\tau_{\ell}}} \mbox{ for } \ell=1,2,\dots t.$
Let $\Delta_i = \{j :  \frac{d_j}{b_j}=\frac{d_{\tau_i}}{b_{\tau_i}}\}$. Then the optimal solution $y_{\ell}$ for ULP($\mu$) corresponding to the breakpoint $\mu_{\ell}$ for $\ell = 1,2,\ldots ,t$ is given recursively by

\begin{minipage}{6.0cm}
 \begin{align*}
y_j^{\ell} = \begin{cases} y^{\ell-1}_j &\mbox{ if } j\notin \Delta_{\ell} \\
1 & \mbox{ if } j\in \Delta_{\ell} \mbox{ and } y^{\ell-1}_j = 0\\
0 & \mbox{ if  } j\in \Delta_{\ell} \mbox{ and } y^{\ell-1}_j = 1,
\end{cases}
\end{align*}

\end{minipage}
\begin{minipage}{6.0cm}
\begin{align*}
\text{where }y_j^{0} = \begin{cases} 1 &\mbox{if }  j\in S^+ \\
0 & \mbox{otherwise. }
\end{cases}
\end{align*}
\end{minipage}

Define \begin{align*}
D^0&=\sum_{j\in S^+}d_j,\; B^0 =\sum_{j\in S^+}b_j,
D^{\ell}=D^{\ell-1}-\sum_{j\in \Delta_{\ell},y_j^{\ell-1}=1}d_j+\sum_{j\in \Delta_{\ell},y_j^{\ell-1}=0}d_j\mbox{ and }\\
B^{\ell}&=B^{\ell-1}-\sum_{j\in \Delta_{\ell},y_j^{\ell-1}=1}b_j+\sum_{j\in \Delta_{\ell},y_j^{\ell-1}=0}b_j.
\end{align*}

\noindent Then the optimal objective function value at $\mu_{\ell}$ is given by
$h_2(\mu_{\ell})=D^{\ell}+\mu_{\ell}B^{\ell}.$

Given $y^{\ell-1}$, $D^{\ell-1}$ and $B^{\ell-1},$ we can compute $y^{\ell}$, $D^{\ell}$, and $B^{\ell}$ in $O(|\Delta_{\ell}|)$ time and, hence,  $h_2(\mu_{\ell})$ and $y^{\ell}$  can be identified in $O(|\Delta_{\ell}|)$ time. Since $\Delta_{\ell}\cap \Delta_k=\emptyset$ for $\ell\neq k$, $y^{\ell}$ and $h_2(\mu^{\ell})$ for $\ell=1,2,\ldots ,t$ can be identified in $O(n\log n)$ time.\\

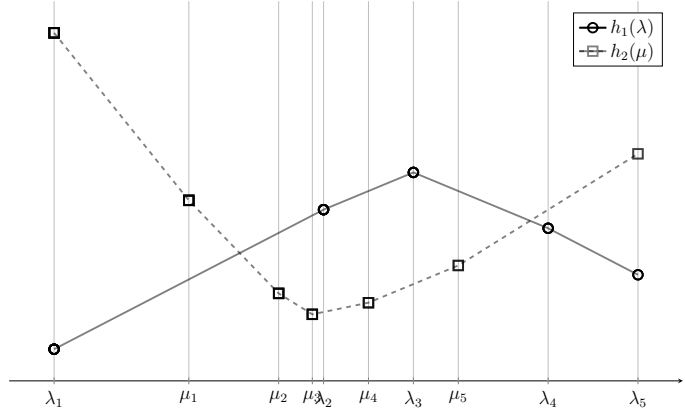
\begin{figure}
\centering
\begin{tikzpicture}[thick,scale=0.6, every node/.style={transform shape}]
	\begin{axis}[
		xmin=-6,
		xmax=9,
		width=\textwidth,
		height=10cm,
		legend pos=north east,
		xlabel={},
		ylabel={objective},
		xtick=\empty,
		ytick=\empty,
		hide y axis=true,
		axis x line=bottom,
		extra x ticks={-5, -2, 0, 0.75, 1, 2, 3, 4, 6, 8},
		extra x tick labels={$\lambda_1$, $\mu_1$, $\mu_2$, $\mu_3$, $\lambda_2$, $\mu_4$, $\lambda_3$, $\mu_5$, $\lambda_4$, $\lambda_5$},
		extra x tick style={grid=major, tick label style={anchor=north}},
		cycle list={
			{black,very thick,opacity=0.5,mark=o,mark size=3pt,mark options={opacity=1,fill=none}},
			{black,dashed,very thick,opacity=0.5,mark=square,mark size=3pt,mark options={solid}},
		},
	]
	\addplot+ coordinates {
		(-5, 11)
		(1, 26) +- (0,16)
		(3, 30) +- (0,20)
		(6, 24) +- (0,14)
		(8, 19) +- (0,9)
	};
	\addlegendentry{$h_1(\lambda)$}
	\addplot+ coordinates {
		(-5, 45)
		(-2, 27)
		(0, 17)
		(0.75, 14.75)
		(2, 16)
		(4, 20)
		(8, 32)
	};
	\addlegendentry{$h_2(\mu)$}
	\end{axis}
\end{tikzpicture}
\caption{An example of $h_1(\lambda)$ and $h_2(\mu)$ when $a$ = [2, 2, -3, 4, -2], $c$ = [4, 5, 6, 10, 5], $b$ = [1, 1, -4, 0, -1, -2, 1], $d$ = [5, -2, 3, 3, 4, 0, 2].}\label{fig11} \end{figure}

Now the algorithm for solving BQPC(1) can be described as follows.
First, compute $x^1,y^1, h_1(\underline{\lambda})$ and $h_2(\underline{\lambda})$. Set $f(x^1,y^1)=h_1(\underline{\lambda})+h_2(\underline{\lambda})$. Sort all breakpoints of $h_1(\lambda)$ and $h_2(\mu)$ for $\underline{\lambda} \leq \lambda, \mu \leq \overline{\lambda}$ (see Fig:~\ref{fig11}) and scan these breakpoints starting from $\underline{\lambda}$ in the increasing order. As we pass breakpoints of $h_2(\mu)$ keep updating the solution $y$ of ULP($\mu$) corresponding to this breakpoint and the objective function value of this solution until we hit a breakpoint $\lambda_k$ of $h_1(\lambda)$. At this point compute the solution $x^k$ and $h_1(\lambda_k)$. The most recent solution $y$ identified is selected as $y^k$ and compute $h_2(\lambda_k)$. Note that $h_2(\lambda_k)$ can be obtained in $O(1)$ time using slope of $h_2(\mu)$ for the interval containing $\lambda^k$. Update $f(x^k,y^k)$ and the process is continued until all breakpoints of $h_1(\lambda)$ including $\overline{\lambda}$ are examined and the overall best solution is selected. It is not difficult to verify that the complexity of this procedure is $O(n\log n)$.

Let us now consider a special case of BQP01 when $Q$ is of rank one (equivalently a special case of BQPC(1)) of the form
\begin{flalign*}
\text{BQPC($1,c\vee d =0$): \hspace{1cm}}&\text{Maximize } (a_0+ax)(b_0+by)+cx+dy&\\
&\text{Subject to: }  x \in [0,1]^m, y\in [0,1]^n,&
\end{flalign*}
where either $c=0$ or $d=0$.  We now show that this problem can be solved more efficiently.
\begin{theorem}An optimal solution to BQPC($1,c\vee d =0$) can be identified in O($n$) time.
\end{theorem}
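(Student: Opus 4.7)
The plan is to exploit the fact that when one of $c$, $d$ is the zero vector, the objective of BQPC$(1, c \vee d = 0)$ is linear in one of the two variable blocks, so only two candidate configurations of that block need to be examined. Assume without loss of generality that $c = 0$; the case $d = 0$ is symmetric (swap the roles of $(a,x)$ and $(b,y)$) and yields the same complexity since $m \leq n$.

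Fix any $y \in [0,1]^n$ and set $\beta(y) = b_0 + by$. The objective $(a_0+ax)\beta(y) + dy$ is affine in $x$, with the coefficient of $x_i$ equal to $a_i\beta(y)$. Consequently, the optimal $x$ depends only on the sign of the scalar $\beta(y)$: if $\beta(y) \geq 0$ one may take $x = x^+$, where $x_i^+ = 1$ iff $a_i > 0$ (so $ax^+ = \overline\lambda$); if $\beta(y) \leq 0$ one may take $x = x^-$, where $x_i^- = 1$ iff $a_i < 0$ (so $ax^- = \underline\lambda$); ties are harmless. Hence some optimal solution of BQPC$(1, c\vee d = 0)$ has $x \in \{x^+, x^-\}$.

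For each of the two candidates $x \in \{x^+, x^-\}$, set $\lambda = a_0 + ax$ and solve the residual problem
\[
\max_{y \in [0,1]^n}\; \lambda b_0 + \sum_{j=1}^n (\lambda b_j + d_j)\,y_j,
\]
which is a separable linear program whose optimal solution is $y_j = 1$ iff $\lambda b_j + d_j > 0$. This can be evaluated in $O(n)$ time by a single scan over $j$.

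Putting the pieces together: compute $\overline\lambda$ and $\underline\lambda$ and build $x^+$, $x^-$ in $O(m)$ time, then for each of the two candidates compute the optimal $y$ and the corresponding objective in $O(n)$ time, returning the better of the two. The total cost is $O(m+n) = O(n)$ since $m \leq n$, and notably no sorting is required, in contrast with the general rank-one algorithm. The only conceptual step is the reduction to two candidate $x$-vectors, which the linearity-in-$x$ argument above establishes; the remaining operations are direct evaluations.
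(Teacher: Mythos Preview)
Your proof is correct and follows essentially the same strategy as the paper: reduce one variable block to two candidate vectors, then optimize the other block in linear time. The paper handles the case $d=0$ and reaches the two-candidate reduction for $y$ via convexity of the value function $L(\lambda)=\max_{x}\{(a_0+ax)\lambda+cx\}$, whereas you handle $c=0$ and obtain the two-candidate reduction for $x$ by a direct sign analysis of $\beta(y)=b_0+by$; these are dual presentations of the same idea.
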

\begin{proof}Suppose $d=0$.  Let $L(\lambda)=\text{Maximum }_{x\in \{0,1\}^m}\left\{ \left(a_0+ax\right)\lambda +cx\right\}$. Clearly, $L(\lambda)$ is
is a piecewise linear convex function of $\lambda$. Suppose  $b_0+by$ maximizes at $y^0$ and minimizes at $y^*$ with respective objective function values, say, $\lambda^0$ and $\lambda^*$. As $\lambda$ varies in the interval $[\lambda^*,\lambda^0]$, $L(\lambda)$ traces the best objective function values for BQPC($1,c\vee d =0$) for all possible solution vectors $y$. Thus, BQPC($1,c\vee d =0$) reduces to maximizing $L(\lambda)$ for
$\lambda \in [\lambda^*,\lambda^0]$. Convexity of $L(\lambda)$ guarantees that its maximum is attained when $\lambda= \lambda^0$ or $\lambda^*$, i.e., when $y=y^0$ or $y=y^*$. But when $y$ is fixed, an optimal $x$ can be obtained in linear time. Since $y^0$ and $y^*$ can be identified in $O(n)$ time, the result follows. The case when $c=0$ can be established analogously.
\end{proof}

\subsection{Additively decomposable cost matrix}

Let us now examine the case when $q_{ij}=a_i+b_j$ for $i=1,2,\ldots ,m$ and $j=1,2,\ldots ,n$.   Note that when $q_{ij}=a_i+b_j$, rank of $Q$ is at most 2 and hence can be solved in polynomial time. We now present an $O(mn\log n)$ algorithm to solve this problem.
\begin{theorem}If $q_{ij}=a_i+b_j$ for $i=1,2,\ldots ,m$ and $j=1,2,\ldots ,n$, then BQP01 can be solved in $O(mn\log n)$.\end{theorem}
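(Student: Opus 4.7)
The key observation is that when $q_{ij} = a_i + b_j$, the bilinear form decomposes as
$$\sum_{i,j} q_{ij} x_i y_j \;=\; \Bigl(\sum_j y_j\Bigr)\Bigl(\sum_i a_i x_i\Bigr) + \Bigl(\sum_i x_i\Bigr)\Bigl(\sum_j b_j y_j\Bigr).$$
Setting $p := \sum_i x_i$ and $q := \sum_j y_j$, the objective rewrites as
$$f(x,y) \;=\; \sum_{i=1}^m (q a_i + c_i)\, x_i \;+\; \sum_{j=1}^n (p b_j + d_j)\, y_j \;+\; c_0,$$
which, once $p$ and $q$ are frozen, separates completely between $x$ and $y$. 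Subject to the cardinality constraints $\sum_i x_i = p$ and $\sum_j y_j = q$, the optimal $x$ picks the $p$ largest entries of $(q a_i + c_i)_i$ and the optimal $y$ picks the $q$ largest entries of $(p b_j + d_j)_j$. Since every $(x,y)\in\{0,1\}^m\times\{0,1\}^n$ has some cardinality pair, taking the maximum over all $(p,q) \in \{0,\dots,m\}\times\{0,\dots,n\}$ yields the global optimum.

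The plan is to precompute, for each $q \in \{0, 1, \ldots, n\}$, the array $(q a_i + c_i)_{i=1}^m$ sorted in decreasing order together with its prefix sums; this gives, in $O(1)$ per query, the value
$$V_x(p,q) \;:=\; \max\Bigl\{\textstyle\sum_i (q a_i + c_i)\, x_i \;:\; x\in\{0,1\}^m,\ \sum_i x_i = p\Bigr\}$$
for every $p$. Symmetrically, for each $p \in \{0, 1, \ldots, m\}$, sort $(p b_j + d_j)_{j=1}^n$ and build prefix sums to obtain $V_y(p,q)$ for every $q$. Finally, sweep over all $(p,q)$ pairs, compute $V_x(p,q) + V_y(p,q) + c_0$, and return the best value; the achieving $x$ and $y$ are read off from the corresponding sorted orderings.

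The running time is $(n+1)\cdot O(m\log m)$ for the $V_x$ tables, $(m+1)\cdot O(n\log n)$ for the $V_y$ tables, and $O(mn)$ for the final sweep. Since $m \leq n$, these sum to $O(mn\log n)$. The decomposition is the crux of the argument: once the bilinear term is recognised as depending on $x$ only through $p=\sum_i x_i$ and $a^Tx$, and on $y$ only through $q=\sum_j y_j$ and $b^Ty$, everything else is routine sorting and accounting. The only mild subtlety is recognising that the cardinality constraints are what enforce the separation, so an enumeration over $(p,q)$ is essential; but with $O(mn)$ such pairs and $O(\log n)$ sorting cost per fixed parameter value, the budget is exactly met.
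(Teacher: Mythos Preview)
Your proof is correct and follows essentially the same approach as the paper: the same decomposition via the cardinality parameters $p=\sum_i x_i$ and $q=\sum_j y_j$ (the paper's $L$ and $K$), the same reduction to selecting top-$p$ and top-$q$ elements after sorting for each fixed parameter value, and the same $O(nm\log m)+O(mn\log n)+O(mn)=O(mn\log n)$ accounting. The only cosmetic difference is that you phrase the inner computation via prefix sums whereas the paper writes it as an incremental update $f^{K,L+1}_1=f^{K,L}_1+Ka_{\alpha^K(L+1)}+c_{\alpha^K(L+1)}$, which is the same thing.
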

\begin{proof}
For any feasible solution $x,y$,
\begin{align*}
f(x,y)&=  x^TQy + cx+dy+c_0\\
&=\sum_{i=1}^ma_ix_i\sum_{j=1}^ny_j+\sum_{j=1}^nb_jy_j\sum_{i=1}^mx_i + \sum_{i=1}^mc_ix_i+\sum_{j=1}^nd_jy_j+c_0.
\end{align*}
Let $\sum_{j=1}^ny_j = K$ and $\sum_{i=1}^mx_i = L$, where $K$ and $L$ are two parameters. Then,
\begin{align*}
f(x,y)& = \sum_{i=1}^m(Ka_i+c_i)x_i+\sum_{j=1}^n(Lb_j+d_j)y_j +c_0.
\end{align*}
Consider the optimization problem
\begin{flalign*}
\text{ILP(K,L): \hspace{1cm}}&\text{Maximize}  \sum_{i=1}^m(Ka_i+c_i)x_i+\sum_{j=1}^n(Lb_j+d_j)y_j +c_0&\\
&\text{Subject to } \sum_{j=1}^ny_j = K \text{ and } \sum_{i=1}^mx_i = L&
\end{flalign*}

For $K = 0, 1, \ldots, n$, let $\alpha^K$ be a permutation of size $m$ such that $K a_{\alpha^K(i)} + c_{\alpha^K(i)} \ge K a_{\alpha^K(i + 1)} + c_{\alpha^K(i + 1)},$  $i=1,\ldots ,m-1$.  For $L = 0, 1, \ldots, m$, let $\beta^L$ be a permutation of size $n$ such that $L b_{\beta^L(j)} + d_{\beta^L(j)} \ge L b_{\beta^L(j + 1)} + d_{\beta^L(j + 1)}$, $j=1,2,\ldots ,n-1$.
Observe that, for fixed $K$ and $L$, the optimal $x = x^{K, L}$ can be obtained by setting $x_i = 1$ for $i = \alpha^K(1), \alpha^K(2), \ldots, \alpha^K(L)$ and $x_i = 0$ for the rest of indices.
Similarly, the optimal $y = y^{K, L}$ can be obtained by setting $y_j = 1$ for $j = \beta^L(1), \beta^L(2), \ldots, \beta^L(K)$ and $y_j = 0$ for the rest of indices.

Let $f^{K, L}_1 = \sum_{i = 1}^m (K a_i + c_i) x^{K, L}_i$ and $f^{K,
L}_2 = \sum_{j = 1}^n (L b_j + d_j) y^{K, L}_j$.  Assume that we know the value of $f^{K, L}_1$ for some $K$ and $L$.  Then we can calculate $f^{K,L + 1}_1$ in $O(1)$ time as:
$$
f^{K,L + 1}_1 = f^{K, L}_1 + K a_{\alpha^K(L + 1)} + c_{\alpha^K(L + 1)} \,.
$$
Observe also that $f^{K, 0}_1 = 0$.  Hence, for a fixed $K$, we can calculate $f^{K, L}_1$ for each $L \in \{ 0, 1, \ldots, m \}$ in $O(m)$ time.  Similarly, for a fixed $L$, we can calculate $f_2^{K, L}$ for each $K = 0, 1, \ldots, n$ in $O(n)$ time.  Let $(K^0, L^0)$ be the values of $K$ and $L$ that maximize $f^{K, L}_1 + f^{K, L}_2$.
Then the optimal solution of the BQP01 is $(x^{K^0, L^0}, y^{K^0, L^0})$, and it can be obtained in $O(nm \log m + mn \log n + mn) = O(mn \log n)$ time.
\end{proof}

\section{Conclusion}
In this paper we studied the problem BQP01 which generalizes QP01, a well studied combinatorial optimization problem. BQP01 is known to be MAX SNP hard. Several interesting polynomially solvable special cases of the problem are identified. In particular, we showed that when rank of the matrix $Q$ is fixed, BQP01 can be solved in polynomial time and improved complexity results are provided for the rank one case and a special case when rank of $Q$ is at most two. By restricting $m=O(\log n)$ or by restricting the size of minimum negative eliminator of $Q$ to be $O(\log n)$, we obtained additional polynomially solvable cases. If $m=\sqrt[k]{n}$ BQP01 is MAX SNP hard and a similar result is obtained if the size of a minimum negative eliminator of $Q$ is $O(\sqrt[k]{n})$. It would be interesting to explore the complexity of the problem when the problem size or data restrictions fall in between these extreme cases. Exploiting the algorithms for our polynomially solvable special cases, efficient exact and heuristic algorithms may be obtained to solve BQP01. This is a topic for further investigation.\\

\noindent\textbf{Acknowledgement:} We are thankful to the referees for their insightful comments which improved the presentation of the paper.

\end{document}